\title[ON THE INCENTERS OF TRIANGULAR ORBITS IN ELLIPTIC BILLIARD]{ON THE INCENTERS OF TRIANGULAR\\ORBITS IN ELLIPTIC BILLIARD }
\author[O. ROMASKEVICH]{Olga \sn{Romaskevich}\thanks{
Supported in part by RFBR grants 
$12-01-31241$ mol-a and $12-01-33020$ mol-a-ved.} }
\begin{document}

\maketitle

\begin{abstract}
We consider $3$-periodic orbits in an elliptic billiard. Numerical experiments conducted by Dan Reznik have shown that the locus of the centers of inscribed circles of the corresponding triangles is an ellipse. We prove this fact by the complexification of the problem coupled with the complex law of reflection.
\end{abstract}

\section{The statement of the theorem and the idea of the proof}

Elliptic billiards are at the same time classical and popular subject (see, for example \cite{KT}, \cite{CM}, \cite{T1} and \cite{T2}) since they continue to deliver interesting problems. We will consider an ellipse and a billiard in it with the standard reflection law: the angle of incidence equals the angle of reflection. Let the trajectory from a point on the boundary repeat itself after two reflections: this means that we obtained a triangle which presents a 3-periodic trajectory  of the ball in the elliptic billiard. Poncelet's famous theorem  \cite{Poncelet} states that the sides of these triangles are tangent to some smaller ellipse confocal to the initial one.

We prove the following fact which was observed experimentally by Dan Reznik \cite{Reznik}: 

\begin{theorem}\label{1}
For every elliptic billiard the set of incenters (the centers of the inscribed circles) of  its triangular orbits is an ellipse.
\end{theorem}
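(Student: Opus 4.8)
The plan is to fix an elliptic billiard, invoke Poncelet's theorem to view the $3$-periodic orbits as a one-parameter family of triangles inscribed in the boundary ellipse and circumscribed about a fixed confocal caustic, and then to track the incenter as this family is deformed. The first simplification I would make is to replace the incenter by a more tractable object. At each vertex the reflection law says that the tangent to the boundary makes equal angles with the two sides of the triangle meeting there; since the triangle lies on one side of that tangent, the tangent is the \emph{external} bisector of the interior angle, and hence the inward normal to the boundary is the \emph{internal} bisector. Consequently the three normals to the boundary ellipse at the three contact points all pass through the incenter, and the incenter is exactly their common point. This turns the problem into showing that the concurrency point of these three normals traces an ellipse as the orbit varies.

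To compute with this I would complexify, as the abstract indicates. Placing the ellipse in standard position with semi-axes $a,b$ and writing $p=\frac{a+b}{2}$, $q=\frac{a-b}{2}$, the boundary is the image of the unit circle under
\[
J(z)=p\,z+\frac{q}{z},
\]
with real boundary points corresponding to $|z|=1$, where $\overline{z}=1/z$ and $\overline{J(z)}=qz+p/z$. In this coordinate the tangent direction at $J(z)$ is proportional to $i\bigl(pz-q/z\bigr)$ and the normal direction to $pz-q/z$. The reflection law, which says that the outgoing velocity is the mirror image of the incoming velocity in the tangent line, then becomes an explicit identity: for three consecutive vertices $z_{-},z_{0},z_{+}$,
\[
\frac{J(z_{+})-J(z_{0})}{\,\overline{J(z_{0})-J(z_{-})}\,}=\frac{p z_{0}^{2}-q}{q z_{0}^{2}-p}.
\]
Because $\overline{J(z)}=qz+p/z$ on the circle, the left-hand side is rational in $z_{-},z_{0},z_{+}$, so this complex reflection law extends the billiard correspondence to an algebraic relation between consecutive parameters.

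Next I would impose this relation at all three vertices $z_{1},z_{2},z_{3}$. By cyclic symmetry these are three cyclically permuted copies of one rational equation, and -- this is the analytic shadow of Poncelet's theorem -- they are not independent: they cut out a one-parameter family. I would parametrize that family rationally by a single variable $w$ ranging over a circle (concretely, by fixing the value of a suitable symmetric function of $z_{1},z_{2},z_{3}$, the constant that encodes the common confocal caustic, and letting a remaining coordinate $w$ vary). With the vertices expressed through $w$, the incenter is obtained by intersecting the normal lines at $J(z_{1})$ and $J(z_{2})$, a routine real linear solve that produces $I(w)$ as an explicit rational function of $w$ and $1/w$. The goal is then to show that $I(w)$ collapses to the form
\[
I(w)=\alpha\,w+\frac{\beta}{w}+\gamma,\qquad |w|=\mathrm{const},
\]
with complex constants $\alpha,\beta,\gamma$ depending only on $a$ and $b$; since any such map carries a circle to an ellipse -- it has exactly the shape of the uniformization $J$ itself -- this would establish the theorem.

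The main obstacle is precisely that last reduction: a priori $I(w)$ is a quotient of high-degree Laurent polynomials, and it is not evident that the denominators cancel and that only the first harmonics $w^{\pm 1}$ survive. I expect the cancellations to follow from systematically using the $3$-periodicity constraint -- that the relevant elementary symmetric functions of $z_{1},z_{2},z_{3}$ are constant along the family -- to rewrite the symmetric expression $I(w)$ in terms of those invariants, leaving a genuinely affine dependence on $w$ and $\overline{w}$. Carrying out this collapse, and thereby identifying the constants $\alpha,\beta,\gamma$ (and hence the axes of the incenter ellipse), is where the real work lies; the reduction to normals and the complex reflection law are the tools that keep the bookkeeping manageable.
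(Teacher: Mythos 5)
Your reduction of the incenter to the common point of the three inward normals is correct (the reflection law makes each normal an internal bisector), and the Joukowski coordinates are a reasonable computational frame, modulo a small fix: for $|z_0|=1$ the right-hand side $\frac{pz_0^2-q}{qz_0^2-p}$ of your reflection identity has modulus $1$, while the left-hand side has modulus $|J(z_+)-J(z_0)|/|J(z_0)-J(z_-)|$, so the law is that the two sides have equal \emph{argument} (their ratio is a positive real), not that they are equal. The genuine gap is the final step, which you yourself flag as ``where the real work lies'': the collapse of $I(w)$ to $\alpha w+\beta/w+\gamma$ is never proved, and it carries the entire content of the theorem. Worse, with the parameter you propose --- a ``remaining coordinate'' among $z_1,z_2,z_3$ after fixing symmetric invariants --- the claimed form is \emph{false} for a structural reason. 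The incenter depends only on the unordered triangle, and as the vertex parameter $z_1$ makes one full turn of the unit circle, the same triangle occurs three times (once per vertex); hence the incenter winds three times around its locus, i.e.\ the rational map $z_1\mapsto I$ has degree at least $3$ onto its image. A map $w\mapsto\alpha w+\beta/w+\gamma$ restricted to a circle $|w|=\mathrm{const}$ winds exactly once and is birational onto its image conic. So no choice of $\alpha,\beta,\gamma$ can work in that variable: you would first need a \emph{birational} coordinate $u$ on the space of unordered triangles (a degree-$3$ rational function of $z_1$ with constant modulus on $|z_1|=1$) in which $I$ becomes first-harmonic, and producing such a $u$ amounts to already knowing the locus is an ellipse --- the plan begs the question at its decisive step.

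For comparison, the paper avoids parametrizing the family altogether. It notes that the locus $\mathcal{C}$ is an algebraic (rational) curve and counts its intersections with a single line, the focal line $F$ of $\Gamma$: the complex reflection law classifies degenerate complexified orbits (one isotropic degenerate side plus a doubled non-isotropic side) and rules out their incenters lying on $F$; for non-degenerate orbits a symmetry argument (five tangent lines determine a conic) forces the triangle to be $F$-symmetric, giving exactly two intersection points, and an evenness argument on the inradius gives transversality, hence index $1$. A curve meeting a line in exactly two points counted with multiplicity is a conic, and its real part is bounded, hence an ellipse. If you want to rescue a computational proof in your framework, the realistic target is not the first-harmonic form in $z_1$, but a direct verification that the image of the degree-$3$ rational map $z_1\mapsto I(z_1)$ satisfies a quadratic equation; that verification is precisely the work your proposal defers.
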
 

The proof uses very classical ideas: complexify and projectivize, that is, replace  the Euclidean plane by the complex projective plane. This approach was used by Ph. Griffiths and J. Harris in \cite{GH} and, more recently, by R. Schwartz in \cite{Schwartz}.
The main tool in the proof is that of complex reflection: we consider an ellipse as a complex curve and define a complex law of reflection off a complex curve. The locus of the incenters will be also a complex algebraic (even rational) curve. We will prove that the latter curve is a conic in $\mathbb{CP}^2$. Its real part will be a bounded conic -- an ellipse. 

The reasons for developping complex methods for the solution of a problem in planimetry are twofold: first of all, such an approach shows that sometimes complexification paradoxically simplifies things. We think that complex methods could be a useful tool in obtaining many results of this kind. Ideologically, this work is related with the recent work by A. Glutsyuk where he studies complex reflection, see for example \cite{1} an the joint work with Yu. Kudryashov \cite{GK}. The second reason to develop the complex approach for this particular problem was the incompetence of the author to prove this fact with real tools besides computation. The reader is encouraged to find an alternative proof of the Theorem \ref{1}.  

Complex reflection law and its basic properties needed here are reviewed in Section \ref{2}. Section \ref{3} contains the proof of Theorem \ref{1}. In Section \ref{sec4} we discuss the position of the foci of an obtained ellipse.

\section{Complex reflection law}\label{2}
For our purposes it will be useful to pass from the Euclidean plane $\mathbb{R}^2$ to the complex projective plane $\mathbb{CP}^2$: the metric now is replaced, in local complex coordinates $(z,w)$, by a quadratic form $ds^2=dz^2+dw^2$. In the following we will be interested in the geometry of this new space $\mathbb{CP}^2$ with quadratic form $ds^2$. One could have replaced the initial Euclidean metric by a pseudo-euclidean one: the geometry of such a space is also interesting and somewhat similar to our case. The best references here will be \cite{T} and \cite{GR}.

\begin{definition}
The lines with directing vectors that have zero length are called \emph{isotropic}. All other lines are called \emph{non-isotropic}.
\end{definition}

Let us  fix a point $x \in \mathbb{CP}^2$ and define  complex symmetry with respect to a line passing through $x$ as a map acting on the space $\mathcal{L}_x$ of all lines passing through $x$. There are two isotropic lines $L_{x}^{v_1}$ and $L_{x}^{v_2}$ in $\mathcal{L}_x$ with directing vectors $v_1= (1,i)$ and  $v_2 = (1,-i)$.

\begin{definition}[Complex reflection law]\label{CRL}
For a point $x\in\mathbb{CP}^2$, the \emph{complex reflection (symmetry)} in a non-isotropic line $L_x \in \mathcal{L}_x$ is the mapping given by the same formula as in the case of standard real symmetry: it's a linear map that in the coordinates defined by the line $L_x$ and its orthogonal line $L_x^{\perp}$ has a diagonal matrix $\begin{pmatrix}
1&0\\
0&-1
\end{pmatrix}.
$

The image of any line $L$ under reflection in an isotropic line $L_{x}^{v_1}$ (or $L_{x}^{v_2}$) is defined as a limit of its images under reflections with respect to a sequence of non-isotropic lines converging to $L_{x}^{v_1}$ (or $L_{x}^{v_2}$). 

Moreover, the complex reflection in a curve is the reflection in its tangent line.
\end{definition}

\begin{theorem}[\cite{1}]\label{111}
\begin{itemize}
\item[a.] The complex symmetry with respect to any isotropic line $L$ at some point $x \in L$ is well defined for all non-isotropic lines (i.e. the latter limit of the images of a sequence of non-isotropic lines exists) and maps every non-isotropic line $X\ni x$ to $L$.
 
\item[b.] Under the reflection at the point $x$ with respect to some isotropic line $L \in \mathcal{L}_x$, the line $L$ itself may be mapped to any line passing through $x$ (i.e. the mapping in this case is multivalued). In particular, it can stay fixed.
\end{itemize}
\end{theorem}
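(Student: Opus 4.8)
The plan is to choose coordinates on the pencil $\mathcal{L}_x \cong \mathbb{CP}^1$ in which the complex symmetry becomes completely explicit, and then read off both statements as elementary limits. First I would complexify the Euclidean scalar product to the symmetric bilinear form $\langle (z_1,w_1),(z_2,w_2)\rangle = z_1 z_2 + w_1 w_2$ associated to $ds^2$, whose isotropic directions are exactly $v_1=(1,i)$ and $v_2=(1,-i)$, and use $\{v_1,v_2\}$ as a basis. Since $\langle v_1,v_1\rangle = \langle v_2,v_2\rangle = 0$ while $\langle v_1, v_2\rangle \neq 0$, the form is purely off-diagonal in this basis, which is what makes everything transparent: a line through $x$ with direction $a v_1 + b v_2$ is the point $[a:b]\in\mathbb{CP}^1$, the isotropic lines are $L_x^{v_1}=[1:0]$ and $L_x^{v_2}=[0:1]$, and the orthogonal of $[a:b]$ is $[a:-b]$.

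With this normalization I would write the reflection in a non-isotropic mirror $u=[a:b]$ (so $ab\neq 0$) by the usual formula $w \mapsto 2\frac{\langle w,u\rangle}{\langle u,u\rangle}u - w$ and simplify it to the strikingly simple rule
\[
R_u\colon [c:d]\longmapsto [a^2 d : b^2 c].
\]
In the affine chart $[1:\zeta]$ this is the involution $R_\mu\colon \zeta \mapsto \mu^2/\zeta$, where $\mu=b/a$ is the slope of the mirror; its two fixed points $\zeta=\pm\mu$ are precisely the mirror $u$ and its orthogonal $u^{\perp}$, confirming the $\mathrm{diag}(1,-1)$ description of Definition \ref{CRL}. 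This one formula contains both claims.

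For part (a), I would fix a non-isotropic line $X=[1:\zeta]$ with $\zeta \neq 0,\infty$ and let the mirror tend to the isotropic line $L=L_x^{v_1}$, i.e. $\mu\to 0$. Then $R_\mu(X)=\mu^2/\zeta \to 0$ independently of the path, so the limit exists and equals $L$; the same computation (or the symmetry $v_1\leftrightarrow v_2$) handles $L_x^{v_2}$ and shows that every non-isotropic line through $x$ is sent to $L$.

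For part (b) the essential point is that the two fixed points $\pm\mu$ of $R_\mu$ collide at $\zeta=0$ as $\mu\to 0$, so the function $(\mu,\zeta)\mapsto \mu^2/\zeta$ has an indeterminacy at the origin. Reflecting $L$ in $L$ amounts to evaluating this limit at $X=L$, i.e. along sequences with $\mu_n\to 0$ and $\zeta_n\to 0$; since $R_{\mu_n}(\zeta_n)=\mu_n^2/\zeta_n$ can be driven to any prescribed value $\lambda$ by the choice $\zeta_n=\mu_n^2/\lambda$, the image is genuinely multivalued and may be any line through $x$. In particular, taking $\zeta_n=\mu_n$ (reflecting the mirror in itself, where $R_{\mu_n}(\mu_n)=\mu_n\to 0$) keeps $L$ fixed. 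I expect this last step to be the only delicate point: one must argue that the honest reflection of $L$ is the full indeterminacy of this double limit, rather than the single value $L_x^{v_2}$ that one obtains by freezing $X=L$ and letting only the mirror vary. Everything else reduces to the one-line formula above.
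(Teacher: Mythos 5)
Your proposal is correct, and at its core it coincides with the paper's own route: your formula $R_\mu(\zeta)=\mu^{2}/\zeta$, written in a coordinate on the pencil $\mathcal{L}_x$ for which the two isotropic lines are $0$ and $\infty$, is precisely the paper's Lemma~\ref{11} ($\tau_{\varepsilon}(z)=\varepsilon^{2}/z$), and the paper deduces Theorem~\ref{111} from that formula by the same limiting arguments you give. The one genuine difference is how the formula is established. You compute it directly from the complexified bilinear form in the isotropic basis $\{v_1,v_2\}$, which is more elementary and has the side benefit of confirming the $\mathrm{diag}(1,-1)$ normalization of Definition~\ref{CRL} (the two fixed points $\pm\mu$ being the mirror and its orthogonal). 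The paper instead argues synthetically: $\tau_{\varepsilon}$ is a nontrivial projective involution of the pencil that fixes $\varepsilon$ and preserves the pair $\{0,\infty\}$; it must swap $0$ and $\infty$, since otherwise it would have three fixed points on the line at infinity and, being also pointwise fixed on the mirror, would be the identity; and the unique M\"obius map fixing $\varepsilon$ and swapping $0,\infty$ is $z\mapsto\varepsilon^{2}/z$. As for the point you flag as delicate in part (b), your double-limit reading is indeed the intended one, and it is forced by the statement itself: freezing $X=L$ and moving only the mirror gives the single value $\tau_{\mu}(0)=\infty$, i.e.\ the other isotropic line, which would contradict the asserted multivaluedness. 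In Glutsyuk's work \cite{1}, from which Theorem~\ref{111} is quoted, reflection at an isotropic tangency is defined through the closure of the full reflection correspondence (mirror, incoming line, outgoing line), which is exactly your joint limit of $\mu^{2}/\zeta$ as $(\mu,\zeta)\to(0,0)$: it attains any prescribed $\lambda$ along $\zeta=\mu^{2}/\lambda$, and the value $0$ (so that $L$ stays fixed) along $\zeta=\mu$. The paper does not spell this out, contenting itself with the remark that Lemma~\ref{11} implies the theorem, so on this last point your write-up is the more explicit of the two.
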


The isotropic directions generated by the vectors $v_{1}$ and $v_2$ can be represented by the points $I_{1}=(1: i:0) \in \mathbb{CP}^2$
and $I_{2}=(1:- i:0) \in \mathbb{CP}^2$, respectively. We  choose an affine coordinate $z$ on the projective line $\mathbb{CP}^1=\mathbb{C} \cup \infty$ at infinity, that is, the line through points $I_1$ and $I_2$ in such a way that $I_1=0$ and $I_2 =\infty$. The lemma below implies Theorem \ref{111} and follows easily from the definition. It describes the reflection in a line close to isotropic.

\begin{lemma}[\cite{1}]\label{11}
For any $\varepsilon \in \bar{\mathbb{C}} \setminus \left\{0, \infty\right\}$, let  $L_{\varepsilon}$ be the line through the origin $(0,0)\in \mathbb{C}^2$ and having direction $\varepsilon$. Let $\tau_{\varepsilon}: \mathbb{CP}^1 \rightarrow \mathbb{CP}^1$ be the reflection in  $L_{\varepsilon}$ acting on the space $\mathbb{CP}^1$ of the lines through the origin. Then $
\tau_{\varepsilon}(z)=\frac{\varepsilon^2}{z}$ in the above introduced coordinate $z$.
\end{lemma}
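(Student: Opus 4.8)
The plan is to reduce the statement to an explicit computation in coordinates adapted to the isotropic directions. Since $\tau_\varepsilon$ is induced by the linear reflection $\sigma$ of $\mathbb{C}^2$ fixing $L_\varepsilon$ pointwise and acting as $-\mathrm{id}$ on its orthogonal complement, it is an honest projective (Möbius) transformation of the $\mathbb{CP}^1$ of directions; in particular it is determined by very little data, which is what makes a short computation suffice.

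First I would pass to the null coordinates $p = x + i y$, $q = x - iy$ on $\mathbb{C}^2$, in which the quadratic form becomes $ds^2 = dp\,dq$ and the two isotropic directions $I_1=(1:i)$, $I_2=(1:-i)$ become the coordinate axes $\{p=0\}$ and $\{q=0\}$. Writing a direction as $(1:t)$ in these coordinates, the polarization of $dp\,dq$ gives $\langle(1,t_0),(1,t)\rangle=\tfrac12(t_0+t)$ and $\langle(1,t_0),(1,t_0)\rangle=t_0$. Because $L_\varepsilon$ is non-isotropic its direction $d=(1,t_0)$ has $\langle d,d\rangle=t_0\neq 0$, so the reflection formula $\sigma(v)=2\frac{\langle v,d\rangle}{\langle d,d\rangle}\,d-v$ is legitimate. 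Applying it to $v=(1,t)$ gives, after a one-line simplification, $\sigma(v)=\big(\tfrac{t}{t_0},\,t_0\big)\sim(t:t_0^2)$, i.e. the induced map on directions is $t\mapsto t_0^{2}/t$.

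It remains to translate this into the coordinate $z$ of the lemma. The coordinate $z$ is any Möbius coordinate on the line at infinity sending $I_1\mapsto 0$ and $I_2\mapsto\infty$; in the $t$-coordinate these are $t=\infty$ and $t=0$, so necessarily $z=c/t$ for some constant $c\in\mathbb{C}^{*}$. Substituting $t=c/z$ and $t_0=c/\varepsilon$ (where $\varepsilon$ is the $z$-coordinate of the direction of $L_\varepsilon$) into $t\mapsto t_0^2/t$, the constant $c$ cancels and one obtains exactly $\tau_\varepsilon(z)=\varepsilon^{2}/z$. As a cross-check, $\tau_\varepsilon$ fixes $z=\varepsilon$ (the line $L_\varepsilon$ itself) and $z=-\varepsilon$; and indeed orthogonality $\tfrac12(t+t')=0$ reads $t'=-t$, i.e. $z\mapsto-z$, so $z=-\varepsilon$ is precisely the direction of $L_\varepsilon^{\perp}$. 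Thus the two fixed directions are those of $L_\varepsilon$ and $L_\varepsilon^{\perp}$, which is the geometric content one expects, and since a non-trivial Möbius involution is determined by its two fixed points $\{\pm\varepsilon\}$, this gives an independent confirmation of the formula $z\mapsto \varepsilon^2/z$.

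The computation is elementary, so the only real obstacle is conceptual bookkeeping: keeping the three parametrizations (Euclidean slope, null-coordinate $t$, and the chosen coordinate $z$) distinct, and checking that the answer is independent of the arbitrary normalization constant $c$ in $z=c/t$. The one genuine hypothesis that must be used is the non-isotropy of $L_\varepsilon$, which guarantees $\langle d,d\rangle\neq0$ and hence that the reflection is a well-defined involution with two distinct fixed directions; on the boundary $\varepsilon\in\{0,\infty\}$ this breaks down, consistent with the excluded cases and with Theorem \ref{111}.
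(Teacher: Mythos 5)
Your proof is correct, but it takes a genuinely different route from the paper's. The paper argues synthetically, with no coordinates on $\mathbb{C}^2$ at all: since the reflection is complex-linear, $\tau_{\varepsilon}$ is a Möbius transformation of the pencil of lines through the origin; it preserves the two-point set $\{0,\infty\}$ of isotropic directions and fixes $\varepsilon$, and a short rigidity argument shows $0$ and $\infty$ must be swapped (otherwise $\tau_{\varepsilon}$ would fix three points of the infinity line, hence fix it pointwise, and since the reflection also fixes $L_{\varepsilon}$ pointwise it would be the identity, contradicting that it is a nontrivial involution); any Möbius map swapping $0$ and $\infty$ has the form $z\mapsto c/z$, and fixing $\varepsilon$ forces $c=\varepsilon^2$. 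You instead compute the reflection explicitly in the null coordinates $p=x+iy$, $q=x-iy$, obtain $t\mapsto t_0^2/t$ on directions from the formula $\sigma(v)=2\frac{\langle v,d\rangle}{\langle d,d\rangle}d-v$, and then verify that the conclusion is independent of the normalization constant $c$ in the change of coordinate $z=c/t$ — a point worth checking carefully, as you do, since the paper only pins $z$ down by $I_1=0$, $I_2=\infty$. The paper's approach buys brevity and coordinate-freeness: it never needs the reflection formula, only that $\tau_{\varepsilon}$ is a projective involution preserving isotropic directions. Your approach buys self-containedness: the swapping of $0$ and $\infty$ falls out of the computation instead of requiring a separate contradiction argument, and you get the geometric byproduct that the second fixed point $-\varepsilon$ is the direction of $L_{\varepsilon}^{\perp}$. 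Your closing cross-check (a nontrivial Möbius involution is determined by its two fixed points) is essentially the paper's argument in disguise, so the two proofs reinforce each other. Incidentally, the paper's proof ends with the misprint $\tau_{\varepsilon}(z)=\frac{\varepsilon}{z^2}$; your derivation confirms that the intended formula is indeed $\frac{\varepsilon^2}{z}$, as in the statement of the lemma.
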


\begin{proof}
The map $\tau_{\varepsilon}$ is a projective transformation that  preserves $L_{\varepsilon}$ as well as the set of isotropic lines. So $\tau_{\varepsilon}(\varepsilon)=\varepsilon$ and $\tau_{\varepsilon}\{0, \infty\}=\{0, \infty\}$. Let us show that $\tau_{\varepsilon}$ permutes $0$ and $\infty$. Otherwise, it would have three fixed points on the infinity line $\mathbb{CP}^2 \setminus \mathbb{C}^2$ and therefore be identity map of the infinity line. Moreover, the points lying on $L_{\varepsilon}$ are fixed for $\tau_{\varepsilon}$. In this case $\tau_{\varepsilon}$ should be identity but it's a nontrivial involution, contradiction. 

We see that the restriction of $\tau_{\varepsilon}$ is a nontrivial conformal involution of $\mathbb{CP^2} \setminus \mathbb{C}$ fixing $\varepsilon$ and permuting $0$ and $\infty$. So it should map $z$ to $\frac{\varepsilon}{z^2}$. 
\end{proof}

\section{The proof}\label{3}
Let us consider triangular orbits of the complexified elliptic billiard: the triangles inscribed into a complexified ellipse and satisfying the complex reflection law. Denote the initial ellipse from Theorem \ref{1} by $\Gamma$, and the Poncelet ellipse tangent to all triangular orbits by $\gamma$. We use the same symbols for  complexifications of these conics.
  The following classical fact will be used for $\Gamma$ and $\gamma$, and for the inscribed circles.

\begin{lemma}[\cite{3}, p. $179$, \cite{Berger}, p.334]\label{12}
\begin{itemize}
\item[a.]
Ellipses $\Gamma$ and $\gamma$ in the real plane are confocal if and only if their complexifications have $4$ common isotropic tangent lines and their common foci lie on the intersections of these lines.
\item[b.]The two tangent lines to the complexified circle passing through its center are isotropic.
\end{itemize}
\end{lemma}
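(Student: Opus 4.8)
The plan is to handle both statements projectively through the two circular points $I_1=(1:i:0)$ and $I_2=(1:-i:0)$. Since these are the only points at infinity whose directing vectors $(1,\pm i)$ are null for $ds^2=dz^2+dw^2$, a line is isotropic exactly when it passes through $I_1$ or $I_2$. This single observation is what drives everything below.

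For part b, I would first homogenize the equation $(z-a)^2+(w-b)^2=r^2$ of a circle with center $O=(a,b)$; on the line at infinity $\{Z=0\}$ it becomes $X^2+Y^2=0$, so every complexified circle passes through $I_1$ and $I_2$. Next consider the isotropic line through $O$ in the direction $(1,i)$: feeding this direction into the circle's equation gives $t^2+(it)^2=0\neq r^2$, so the line meets the circle at no affine point. Consequently both of its intersection points with the circle are at its unique point at infinity, namely $I_1$, so the line is tangent to the circle there. The direction $(1,-i)$ gives tangency at $I_2$ in the same way. Since $O$ does not lie on the circle it admits exactly two tangent lines, and we have just produced them as the two isotropic lines through $O$; this is part b.

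For part a the conceptual core is the projective description of a focus: a point $F$ is a focus of a conic $C$ if and only if both isotropic lines through it, namely $FI_1$ and $FI_2$, are tangent to $C$. I would prove this by noting that from $I_1$ there are exactly two tangent lines to $C$ and likewise from $I_2$, so the condition that $FI_1$ and $FI_2$ be both tangent places $F$ at one of the four intersection points of a tangent-from-$I_1$ with a tangent-from-$I_2$. A direct tangency computation on the normalized ellipse $\frac{z^2}{a^2}+\frac{w^2}{b^2}=1$ then identifies these four points as $(\pm c,0)$ and $(0,\pm ic)$ with $c^2=a^2-b^2$, i.e. the two real and two imaginary foci, confirming the description.

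Granting this, the equivalence is bookkeeping. If $\Gamma$ and $\gamma$ are confocal they share the foci $F_1,\dots,F_4$; the four isotropic tangent lines of each conic are the joins $F_jI_1,F_jI_2$, which therefore coincide for the two conics, and the foci sit at their intersections. Conversely, four common isotropic tangents must split as two through $I_1$ and two through $I_2$, since each circular point contributes only two tangents to a conic; their pairwise intersections away from $I_1,I_2$ are, by the focus description, the common foci, so $\Gamma$ and $\gamma$ are confocal. I expect the only real obstacle to be the focus characterization: translating the metric definition of a focus into the tangency condition is where the Euclidean structure must be matched against the algebraic geometry, and it is precisely the point at which the circular points $I_1,I_2$ take over the role of the Euclidean metric for the rest of the argument toward Theorem \ref{1}.
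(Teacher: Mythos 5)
The paper offers no proof of Lemma~\ref{12} at all: it is quoted as a classical fact, with references to Klein (p.~179) and Berger (p.~334), and is then used as a black box in Lemmas~\ref{lemma} and~\ref{main}. So there is nothing in the paper to compare your argument against line by line; what you have written is the standard classical derivation, and it is essentially correct. Your part~b is sound: homogenizing shows every real circle passes through the circular points $I_1=(1:i:0)$ and $I_2=(1:-i:0)$, the isotropic line through the center has no affine intersection with the circle, so by B\'ezout it meets the circle with multiplicity $2$ at its unique point at infinity and is tangent there, and since exactly two tangents pass through a point off a smooth conic these are all of them. Your part~a rests on the Pl\"ucker characterization of foci (both isotropic lines through a focus are tangent to the conic), verified by the discriminant computation on the normal form $\frac{z^2}{a^2}+\frac{w^2}{b^2}=1$, which locates the four intersections of the isotropic tangents at $(\pm c,0)$ and $(0,\pm ic)$, $c^2=a^2-b^2$; the equivalence then follows by the bookkeeping you describe. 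Two points deserve to be made explicit. First, the count ``exactly two tangents from $I_1$ and two from $I_2$'' requires that the conics do not pass through $I_1,I_2$, i.e.\ that neither ellipse is a circle (a circle has only the two tangents at the circular points, and the statement degenerates); this is harmless for the paper's use of the lemma, where the circular case of $\gamma$ is dismissed separately in Lemma~\ref{main}, but your tangent count silently assumes it. Second, your verification of the focus characterization is done only in normal form, so you should add the one-line remark that a Euclidean motion carries any real ellipse to normal form while preserving both the metric foci and the pair $\{I_1,I_2\}$ (hence isotropy of lines), which is what transports the computation to the general case. With those two remarks inserted, your argument is a complete, self-contained proof of a statement the paper leaves to the literature.
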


\begin{definition}[Sides and degenerate sides of a triangle]
A \emph{side} of a triangle in $\mathbb{CP}^2$ with disctinct vertices is a complex line through a pair of its vertices.  A triangle is called \emph{degenerate} if all its vertices lie on the same line. A priori, a triangular orbit may have coinciding vertices. We will call $A$ \emph{the degenerate side} through two coinciding vertices if $A$ is obtained as a limit of sides $A_{\varepsilon}, \varepsilon \rightarrow 0$ of non-degenerate triangular orbits. For such a side $A$ \emph{its image} under reflection is defined as a limit (which exists as the limit in Definition \ref{CRL}) of images of $A_{\varepsilon}$. 
\end{definition}

By taking a family $A_{\varepsilon}$ of lines tangent to $\gamma$ and converging to $A$, and computing their images (in fact, applying Lemma \ref{5} below), one could deduce the structure of degenerate triangular orbits formulated in Lemma \ref{lemma}. 

\begin{lemma}\label{5}
Let $A$ be a common isotropic tangent line to two analytic (algebraic) curves $\gamma$ and $\Gamma$ and let the tangency points be quadratic and distinct. If $A$ is deformed in a family $A_{\varepsilon}$ ($A=A_0$) of lines tangent to $\gamma$ then the image of $A_{\varepsilon}$ under the reflection in $\Gamma$ tends to some non-isotropic line as $\varepsilon \rightarrow 0$. 
\end{lemma}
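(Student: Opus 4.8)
The plan is to reduce everything to a local computation near the two tangency points, where the entire difficulty is to identify the rates at which the relevant directions degenerate, and then to feed those rates into the reflection formula of Lemma \ref{11}. First I would pass to null coordinates $(\xi,\eta)$ adapted to the isotropic direction of $A$, chosen so that $A=\{\xi=0\}$ and so that the direction coordinate $z=d\xi/d\eta$ on the line at infinity has $I_1\leftrightarrow 0$ and $I_2\leftrightarrow\infty$; in these coordinates Lemma \ref{11} reads: the reflection in a line of direction $m$ acts on directions by $z\mapsto m^2/z$. Since $A$ is tangent to both curves, I write $\gamma$ near its tangency point $R=(0,\eta_R)$ as a graph $\xi=g(\eta)$ and $\Gamma$ near its tangency point $Q=(0,\eta_Q)$ as $\xi=f(\eta)$. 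The hypotheses translate into $g'(\eta_R)=f'(\eta_Q)=0$ with $g''(\eta_R),f''(\eta_Q)\neq0$ (quadratic tangency) and $\Delta:=\eta_Q-\eta_R\neq0$ (distinctness of the tangency points).

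Next I would make the deformation explicit: let $A_\varepsilon$ be the tangent line to $\gamma$ at the point with parameter $\eta_R+\varepsilon$. A one-line computation gives its direction $a(\varepsilon)=g'(\eta_R+\varepsilon)\sim g''(\eta_R)\,\varepsilon$, so the incident direction vanishes to first order in $\varepsilon$, confirming that $A_\varepsilon\to A$ becomes isotropic. The crucial step is to locate the reflection point $P_\varepsilon=A_\varepsilon\cap\Gamma$, which by Definition \ref{CRL} is where the mirror lives. Substituting the affine equation of $A_\varepsilon$ into $\xi=f(\eta)$ and expanding at $Q$, the leading balance is between the quadratic term $\frac{1}{2}f''(\eta_Q)(\eta-\eta_Q)^2$ of $\Gamma$ and a term of order $\varepsilon$ whose coefficient is proportional to $\Delta$. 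Because $\Delta\neq0$, this forces $\eta_{P_\varepsilon}-\eta_Q\sim C\sqrt{\varepsilon}$, i.e.\ $P_\varepsilon$ approaches $Q$ only at the square-root rate, so the mirror direction is $m(\varepsilon)=f'(\eta_{P_\varepsilon})\sim f''(\eta_Q)\,C\sqrt{\varepsilon}$, of order $\sqrt{\varepsilon}$.

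Finally I would apply Lemma \ref{11} at $P_\varepsilon$: the reflected line passes through $P_\varepsilon$ with direction $m(\varepsilon)^2/a(\varepsilon)$. Since $m(\varepsilon)^2$ and $a(\varepsilon)$ are both of order $\varepsilon$, their quotient tends to a finite limit, which works out to $2f''(\eta_Q)\,\Delta$; this is nonzero precisely because of quadratic tangency and of $\Delta\neq0$, so the limiting direction is neither $0$ nor $\infty$, that is, non-isotropic. As $P_\varepsilon\to Q$ as well, the reflected lines converge to the genuine non-isotropic line through $Q$ with this direction. (Both local branches $\eta_{P_\varepsilon}-\eta_Q\sim\pm C\sqrt{\varepsilon}$ give the same $m^2$ and hence the same limit, so the conclusion is independent of which intersection point realizes the reflection.) The main obstacle, and indeed the whole point of the lemma, is exactly this mismatch of rates: naively the reflection $z\mapsto m^2/z$ with $m\to0$ should send the non-isotropic incident line to the isotropic line $A$ (cf.\ Theorem \ref{111}), but because the incident direction $a$ also tends to $0$ at a comparable rate, the indeterminate form $m^2/a$ resolves to a finite nonzero value. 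Keeping careful track that the incident direction is of order $\varepsilon$ while the mirror direction is of order $\sqrt{\varepsilon}$ — a discrepancy created by the tangency of $A$ to $\Gamma$ — is the technical heart of the argument.
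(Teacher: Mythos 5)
Your proposal is correct and is essentially the paper's own argument with the details filled in: the paper likewise normalizes the incident direction to order $\varepsilon$, asserts via the same (there unwritten) local computation that the quadratic and distinct tangencies force the tangent line to $\Gamma$ at the intersection point to make an angle of order $\sqrt{\varepsilon}$ with $A$, and then concludes from Lemma \ref{11} that the reflected direction $m^2/a$ has a finite nonzero limit. Your explicit identification of the limit $2f''(\eta_Q)\Delta$, and of the role of $\Delta\neq 0$ and of the two local branches, simply makes precise what the paper calls ``the simple computation.''
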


\begin{proof}
The more general case of this lemma is contained in \cite{1}, see Proposition $2.8$ and Addendum $2$ there. 

The isotropic line $A$ is deformed in a family $A_{\varepsilon}$: let us suppose that the family is chosen in such a way that the angle between $A$ and $A_{\varepsilon}$ is precisely $\varepsilon$. Suppose that $A_{\varepsilon}$ intersects $\Gamma$ in some point $a_{\varepsilon}$ tending to the point $a_0$ of isotropic tangency. The simple computation shows that since the tangency points are quadratic, the tangent line $T_{\varepsilon}$ to $\Gamma$ at the point $a_{\varepsilon}$ has the angle of the order $\sqrt{\varepsilon}$ with $A$. This with lemma \ref{11} gives that the limit of the reflected lines is a non-isotropic one.
\end{proof}

Now we can describe the degenerate triangles occuring in our problem.
\begin{lemma}\label{lemma}
If a triangular orbit in the complexified ellipse $\Gamma$ is degenerate then it has two  coinciding non-isotropic non-degenerate sides $B$ and one degenerate isotropic side $A$. 
\end{lemma}

\begin{proof}
Since $\deg \Gamma = 2$, two vertices should merge, so the degenerate side $A$ through them is tangent to $\Gamma$ and to $\gamma$, and hence  is isotropic by Lemma \ref{12}. The other two sides are non-isotropic by Lemma \ref{5} and they coincide.
\end{proof}
\begin{lemma}[Main lemma]\label{main}
The complex curve of incenters $\mathcal{C}$ intersects the complex line $F$ through the foci of $\Gamma$ at exactly two points with index~$1$. 
\end{lemma}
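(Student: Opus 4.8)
The plan is to realize $\mathcal{C}$ as the image of a rational parametrization of the family of triangular orbits and then to intersect that image with $F$ using the reflective symmetry of the whole configuration across $F$. Concretely, I would first parametrize the orbits by a coordinate $t\in\mathbb{CP}^1$ (say, the tangency point of the first side with $\gamma$, equivalently a point of $\Gamma$), so that each side becomes a rational function of $t$ obtained by iterating the complex reflection law; Lemma \ref{11} makes these formulas explicit. Using Lemma \ref{12}b, the incenter $I(t)$ is the intersection of the two isotropic lines tangent to the incircle, so $I:\mathbb{CP}^1\to\mathbb{CP}^2$ is a rational map and $\mathcal{C}=\overline{I(\mathbb{CP}^1)}$. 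Since $\deg F=1$, by B\'ezout the assertion is equivalent to $\deg\mathcal{C}=2$ realized at two transversal points, provided $F\not\subset\mathcal{C}$.

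The key structural input is the Euclidean reflection $\sigma$ across $F$. Because $\Gamma$ and $\gamma$ are confocal and $F$ is their common axis through the foci (Lemma \ref{12}a), $\sigma$ preserves the form $ds^2$, swaps the two isotropic directions, maps orbits to orbits, and carries incircles to incircles. Hence $I$ is equivariant: writing $w$ for a linear form cutting out $F$ (so $\sigma$ acts by $w\mapsto -w$ and induces an involution, still denoted $\sigma$, on the parameter line), one gets $w(I(\sigma t))=-\,w(I(t))$, i.e. $g(t):=w(I(t))$ is \emph{odd} with respect to $\sigma$. An order-two M\"obius involution of $\mathbb{CP}^1$ has exactly two fixed points $t_1,t_2$, and oddness forces $g(t_1)=g(t_2)=0$. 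These $t_j$ are precisely the two orbits symmetric about $F$ (apex at one of the two points $\Gamma\cap F$), and their incenters are the two expected points of $\mathcal{C}\cap F$.

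It remains to show there are no further intersections and that the two are simple. For the first point I would argue that $I$ is birational onto $\mathcal{C}$: then $I(t)\in F$ forces $I(\sigma t)=\sigma I(t)=I(t)$, hence $\sigma t=t$, so $I^{-1}(F)=\{t_1,t_2\}$ and $\mathcal{C}\cdot F=2$. For transversality, in a local coordinate $s$ at $t_j$ in which $\sigma$ reads $s\mapsto -s$, oddness gives $g(s)=a_1 s+a_3 s^3+\cdots$, so the index is $1$ exactly when $a_1\neq 0$, i.e. the locus crosses $F$ with nonzero transverse speed; I would verify $a_1\neq 0$ by a first-order computation from the parametrization of the first step.

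The main obstacle I anticipate is not the symmetry bookkeeping but controlling the complex count at the boundary of the orbit family. The degenerate orbits of Lemma \ref{lemma}, carrying an isotropic side $A$ tangent to both conics, sit over special values of $t$; they are swapped in $\sigma$-pairs (since $\sigma$ exchanges the two isotropic directions), and their limiting incenters threaten to drift toward the foci, which lie on $F$ and are themselves $\sigma$-fixed. One must therefore verify that these degenerate parameters do not produce spurious intersections of $\mathcal{C}$ with $F$ at the foci, and likewise that $\mathcal{C}$ does not meet the line at infinity along $F$. Equivalently, one must show $g$ has total degree $2$ rather than higher degree with extra $\sigma$-paired zeros; establishing this birationality of $I$ is the delicate step, and it is exactly where the explicit reflection formulas and the degenerate-orbit analysis of Lemmas \ref{5} and \ref{lemma} have to be combined.
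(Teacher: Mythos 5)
Your symmetry set-up (the equivariance $g(\sigma t)=-g(t)$ and the two fixed points of the induced involution) is sound and does produce the two real intersection points; it is essentially a reformulation of the reflection-in-$F$ idea that the paper's proof also exploits. The fatal problem is the step you yourself flag as delicate: it cannot be repaired in the form you propose, because your map $I$ is not birational and cannot be made so. Parametrizing by the tangency point of a side with $\gamma$ (equivalently by a vertex on $\Gamma$) makes $I$ generically $3:1$ onto $\mathcal{C}$, since the three sides (or vertices) of one orbit all give the same incenter. Consequently, if $\mathcal{C}$ is a conic then $g$ has degree $6$, not $2$, and the ``extra $\sigma$-paired zeros'' you hope to rule out really do occur: for the symmetric orbit whose $\sigma$-fixed side is tangent to $\gamma$ at a fixed parameter $t_1$, the other two sides give parameters $u,\sigma u$ with $I(u)=I(\sigma u)=I(t_1)=c_1\in F$, hence $g(u)=g(\sigma u)=0$. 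So extra zeros of $g$ do not signal extra points of $\mathcal{C}\cap F$, and your dichotomy (``degree $2$, or else spurious intersections'') collapses. Passing to the orbit space itself does not save the plan without substantial further work: even a genuinely birational map may identify finitely many pairs of points, so a node of $\mathcal{C}$ lying on $F$ (two non-symmetric orbits exchanged by $\sigma$ sharing an incenter) would give an intersection of index $\geq 2$, and nothing in your argument excludes it.

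What is missing is precisely the paper's key argument, which works with the triangles directly rather than with degrees of maps. For a non-degenerate orbit whose incenter lies on $F$, the reflection $\sigma$ fixes the incircle; if the three sides together with their three reflections were six distinct lines, all six would be tangent to both the incircle and $\gamma$, and since five tangent lines already determine a conic, $\gamma$ would be a circle --- the trivial case. Hence some side must be $\sigma$-invariant (two sides exchanged by $\sigma$ reduce to this case), and there are exactly two lines tangent to $\gamma$ and symmetric in $F$, giving exactly the two centers $c_1,c_2$. Degenerate orbits with incenter on $F$ are excluded separately using Lemmas \ref{lemma} and \ref{12}: the isotropic side $A$ is tangent to the incircle, forcing $c\in A\cap F$, and then each of the alternatives $c\in B$, $c\in B^{\perp}$ forces a line that cannot be isotropic to be isotropic. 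Finally, for the index-$1$ claim the paper does not leave a computation pending: assuming $\dot c(0)=0$, the inradius would have to vary linearly in the deformation parameter (otherwise the incircle and $\gamma$ would be tangent to the three sides at the same points, impossible for two distinct conics), contradicting the evenness of the radius under $\sigma$. As written, your proposal establishes only that the two symmetric orbits yield points of $\mathcal{C}\cap F$; it proves neither that these are the only intersection points nor that they are simple, and those are the actual content of the lemma.
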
   

\begin{proof}
Let $c \in \mathcal{C} \cap F$ and suppose that the corresponding triangle is degenerate, see Figure \ref{picdegenerate}. By Lemma \ref{lemma} one of its sides is isotropic, and two other sides coincide and are non-isotropic. We will denote the isotropic line as $A$ and non--isotropic line as $B$. Line $A$ is tangent to the inscribed circle, so by Lemma \ref{12}, $c \in F \cap A$. Also $c$ is a point of intersection of  bisectors, so either $c \in B$ or $c \in B^{\perp}$. Note that $B$ is tangent to the inscribed circle, hence if $c \in B$, then $B$ should be isotropic, which is a contradiction.
So $c \in B^{\perp}$, but by Lemma \ref{12} $c$ is a focus. $B^{\perp}$ is tangent to $\Gamma$ and passes through the focus, so it should be isotropic which is impossible since $B$ is not isotropic.

\vskip4mm
\begin{figure}
\begin{center}
\includegraphics*[height=6.1cm]{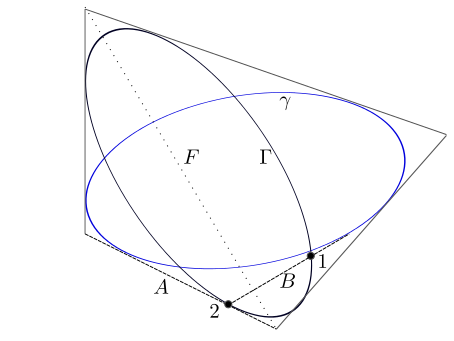}
\caption{Two complex confocal ellipses $\Gamma$ and $\gamma$ having four common isotropic tangent lines. The line $F$ of real foci passes through the intersections of isotropic lines. A degenerate trajectory for an elliptic billiard in $\Gamma$ with caustic $\gamma$: the degenerate triangle is an interval between points $1$ and $2$ and its sides are lines $A$ and $B$. Line $A$ is isotropic while $B$ is not.}\label{picdegenerate}
\end{center}
\end{figure}

Now consider the case of a not degenerate triangle corresponding to $c \in \mathcal{C} \cap F$. Consider the reflection in $F$: the inscribed circle, as well as its center $c$, are mapped to themselves. If the set of the sides of a triangle and their images under the reflection in $F$ consists of \emph{six} lines, then the inscribed circle and the ellipse $\gamma$ should be tangent to all of them. But  five tangent lines already define a conic, so $\gamma$ must be a circle. But in this case, Theorem \ref{1} is trivial and the locus under consideration is a point.

Therefore some sides of the triangle should map to some other sides. One needs to consider two cases: either there is a side which maps to itself, or there are two sides which map to each other. But the latter case  reduces to the former: indeed, the points of intersection of the two exchanging lines with $\Gamma$ (not lying on $F$) are mapped to each other, so the line connecting them is mapped to itself. Therefore, in the non-degenerate case, the corresponding triangle has a side which is symmetric with respect to $F$ and tangent to $\gamma$. There are only two such lines, and  hence only two intersections $c_1$ and $c_2$, both real (see Figure \ref{pic}), and only two triangles corresponding to them, for each $c_i, i=1,2$.

\vskip4mm
\begin{figure}
\begin{center}
\includegraphics*[height=6.1cm]{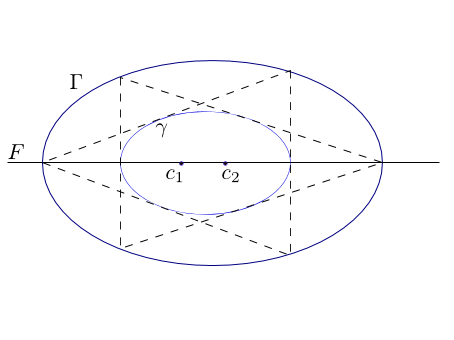}
\caption{Two triangular orbits in $\Gamma$ corresponding to the centers $c_1, c_2$ of inscribed circles lying on the foci line $F$}\label{pic}
\end{center}
\end{figure}

Let us now prove that the intersections $\mathcal{C} \cap F$ have index $1$. 
Let us parametrize an ellipse $\gamma$ by a parameter $\varepsilon$,  and consider the  corresponding center $c(\varepsilon) \in \mathcal{C}$, assuming that $c(0)\in F$. It suffices to prove that $\frac{\partial c}{\partial \varepsilon}(0) \neq 0$. Suppose the contrary: the centers of the circles do not change in the linear approximation: $c(\varepsilon) = c(0)+O(\varepsilon^2)$.
Then the  radius of the incircle $r(\varepsilon)$ has nonzero derivative at $\varepsilon=0$, unless for $\varepsilon=0$ both the incircle and the ellipse $\gamma$ are tangent to the sides of the triangle at the same points. This is impossible if $\gamma$ is not a circle, since two distinct conics can not be tangent at more than two distinct points. So we have that the radii of the incircles change linearly:  $r(\varepsilon) = r(0)+ \alpha \varepsilon(1+o(1))$ for $\alpha \neq 0$. But this is not possible due to  symmetry: indeed, the radius has to be an even function of $\varepsilon$.
\end{proof}

Theorem \ref{1} follows directly from the Lemma \ref{main} since an algebraic curve intersecting some line in exactly two points (with multiplicities) is a conic.

\section{Foci study}\label{sec4}
One could surmise that the ellipse $\mathcal{C}$ that is obtained in Theorem \ref{1} is confocal to the initial one. It appears that it is not so. Picture \ref{focipic} shows how the foci of the ellipse $\mathcal{C}$ move regarding the foci of the ellipse $\Gamma$. 

We suppose that the ratio between the semi-axis of the initial ellipse $\Gamma$ is $t \in (0,1)$. The upper graph on Figure \ref{focipic} is a graph of the distance from the center of $\Gamma$ to its foci: just the arc of the circle $\{\left(t, \sqrt{1-t^2}\right), t \in (0,1)\}$. The lower graph is a graph of analogous (quite complicated) function for the ellipse $\mathcal{C}$. This graph was obtained by pure computation. The reader is encouraged to find the geometrical meaning for the position of the foci of $\mathcal{C}$.
\vskip4mm
\begin{figure}
\begin{center}
\includegraphics*[height=5.5cm]{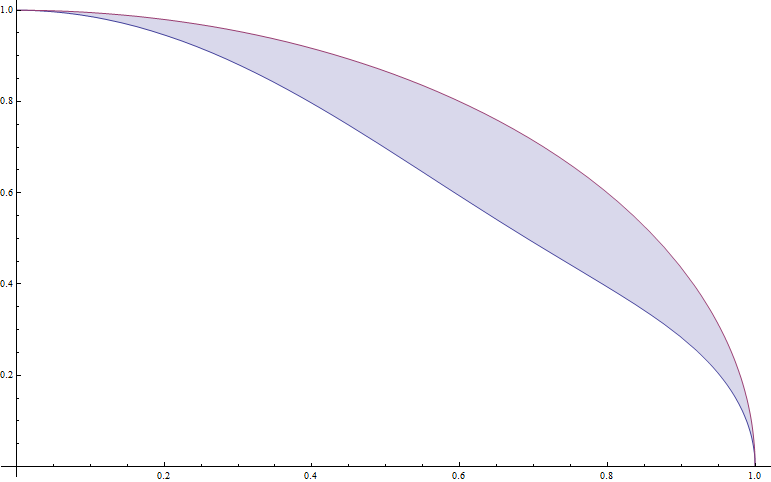}
\caption{Distances between the common center of ellipses $\Gamma$ and $C$ and their foci as functions of the ratio of semi-axes of an initial ellipse}\label{focipic}
\end{center}
\end{figure}

\section{Acknowledgments}
The author is grateful to Alexey Glutsyuk for stating the problem as well as for inspiring her with complex reflection ideas and for permanent encouragement. The author would also like to thank the referee for pointing out a considerable amount of references as well as making very helpful suggestions. The author is also grateful to A. Gorodentsev for providing the reference  \cite{3}, to I. Schurov for his succor in making pictures and to \'{E}. Ghys and S. Tabachnikov for support. This work was done in the pleasant atmosphere of \'Ecole Normale Sup\'erieure de Lyon.

\EMdate{24 april 2013}
%

 
\begin{address}
Olga Romaskevich \\
National Research University Higher School of Economics,
ENS de Lyon, UMPA, 
\email{oromaskevich@hse.ru, olga.romaskevich@ens-lyon.fr, \\
olga.romaskevich@gmail.com}
\end{address}

\begin{thebibliography}{10}
\bibitem{KT} {\sc Kozlov V. , \sc Treshev D.} Billiards, {\it Transl. of Math. Monographs, American Mathematical Society, Providence,no. 89} (1991)
\bibitem{CM} {\sc Chernov N., \sc Markarian R.} Chaotic Billiards, {\it American Mathematical Society, Providence} (2006)
\bibitem{T1} {\sc Tabachnikov S.} Geometry and Billiards, { \it American Mathematical Society} (2005). 
\bibitem{T2} {\sc Tabachnikov S.} Billiards. {\it Soci\'{e}t\'{e} Math\'{e}matique de France, Panoramas et Syntheses, no. 1} (Paris, 1995)
\bibitem{Poncelet} {\sc Poncelet J.V.}  Trait\'e des propri\'et\'es projectives des figures. (1822).
\bibitem {GH} {\sc Griffiths, Ph. and Harris, J.}
Cayley's explicit solution to Poncelet's porism. {\it Enseign. Math. }, 24 (1978), 31--40
\bibitem{T} {\sc Khesin, B. and Tabachnikov, S.} Pseudo-Riemannian geodesics and billiards. {\it Adv. Math.} 221 (2009), 1364--1396
\bibitem{GR} {\sc Dragovic, V. and Radnovic, M.} Ellipsoidal billiards in pseudo-Euclidean spaces and relativistic quadrics. {\it Adv. Math.} 231 (2012), 1173--1201
\bibitem{Schwartz} {\sc R. Schwartz} The Poncelet grid. {\it Adv. Geom.}, 7 (2007), 157--175.
\bibitem{Reznik} $\text{http://www.youtube.com/watch?v=BBsyM7RnswA}$
\bibitem{3} {\sc Klein, F.} Vorlesungen \"uber h\"ohere Geometrie. {\it Springer} (1926)
\bibitem{Berger} {\sc Berger, M.} G\'{e}om\'{e}trie. {\it Nathan}, Paris (1990)
\bibitem{1}  {\sc Glutsyuk, A.} On quadrilateral orbits in complex algebraic planar billiards. {\it manuscript}
\bibitem{GK} {\sc Glutsyuk, A. and Kudryashov, Yu.} No planar billiard possesses an open set of quadrilateral trajectories {\it J. Mod. Dyn.}, 6 (2012), 287--326.
\end{thebibliography}
\end{document}